\newtheorem{thm}{Theorem}[section]
\newtheorem{prop}[thm]{Proposition}
\theoremstyle{definition}
\newtheorem{defin}[thm]{Definition}
\newtheorem{rem}[thm]{Remark}
\numberwithin{equation}{section}
\newcommand{\R}{\mathbb{R}}
\begin{document}




\title{\bf{Involutions of real intervals}}

\author{Gaetano Zampieri\\
Universit\`a di Verona\\ 
Dipartimento di Informatica\\
strada Le Grazie 15, 37134 Verona, Italy\\
E-mail: gaetano.zampieri@univr.it
}

\date{}

\maketitle

\begin{center}
\emph{Dedicated to Jorge Sotomayor for his $70^{th}$ birthday}
\end{center}


\renewcommand{\thefootnote}{}
\footnote{To appear  in \emph{Annales Polonici Mathematici}.}
\footnote{2010 \emph{Mathematics Subject Classification}: Primary 39B22; Secondary 37J45.}

\footnote{\emph{Key words and phrases}: Involutions of real intervals; even functions; symmetric equations; isochronous potentials.}
\renewcommand{\thefootnote}{\arabic{footnote}}
\setcounter{footnote}{0}


\begin{abstract}
This paper shows  a simple construction of the continuous involutions of  real intervals in terms
of the continuous even functions. We also study the smooth involutions defined by symmetric equations.
Finally, we review some applications, in particular the characterization of the isochronous potentials by means
of  smooth involutions.\end{abstract}

\section{Introduction}
An  \emph{involution} is a function  that is its own inverse.
This is an important object in all mathematical fields. We are going to consider continuous involutions
on real  intervals  only. 

\begin{prop}\label{P1}
Let $h:J\to J$ be continuous function on the interval $J\subseteq \R$  which is the inverse of itself and does not coincide with the  identity function $id_J$. Then $h$ is strictly decreasing and has a unique fixed point $\bar x=h(\bar x)$.
\end{prop}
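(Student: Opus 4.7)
The plan is to establish monotonicity first, rule out the increasing case, then use the intermediate value theorem for existence of a fixed point, and finally deduce uniqueness from strict monotonicity.

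First I would observe that since $h\circ h = id_J$, the map $h$ is a continuous bijection of the interval $J$, hence strictly monotone (a standard consequence of continuity and injectivity on an interval). To rule out the case of strict increase, I would pick any $x_0\in J$ with $h(x_0)\neq x_0$, available because $h\neq id_J$. Assuming without loss of generality that $h(x_0) > x_0$, applying the increasing function $h$ to both sides gives $h(h(x_0)) > h(x_0)$, i.e.\ $x_0 > h(x_0)$, a contradiction; the opposite case is symmetric. Hence $h$ must be strictly decreasing.

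For existence of a fixed point, let $\alpha < \beta$ in $\R\cup\{\pm\infty\}$ denote the endpoints of $J$. Since $h$ is a continuous strictly decreasing bijection from $J$ onto $J$, it must exchange the endpoints in the limit sense, so $\lim_{x\to\alpha^+} h(x)=\beta$ and $\lim_{x\to\beta^-} h(x)=\alpha$. The continuous function $g(x)=h(x)-x$ is therefore positive near $\alpha$ and negative near $\beta$, and the intermediate value theorem yields a zero $\bar x$, which is the required fixed point.

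For uniqueness, two distinct fixed points $\bar x_1 < \bar x_2$ would give $\bar x_1 = h(\bar x_1) > h(\bar x_2) = \bar x_2$, contradicting $\bar x_1 < \bar x_2$. The only step requiring any care is the endpoint analysis for general (possibly open or unbounded) intervals, but this is immediate once one recognizes that a continuous strictly decreasing bijection of an interval onto itself must swap its endpoints in the limit.
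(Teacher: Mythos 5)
Your proof is correct, and the first half (strict monotonicity from continuity plus injectivity, then ruling out the increasing case by applying $h$ to $h(x_0)\neq x_0$) is exactly the paper's argument. Where you diverge is the existence of the fixed point. You argue via the behaviour of $h$ at the endpoints of $J$: a decreasing continuous bijection of $J$ onto itself must send $x\to\alpha^+$ to $\beta$ and $x\to\beta^-$ to $\alpha$ in the limit, so $h(x)-x$ changes sign across $J$. This is valid, including for unbounded or non-open intervals, but it is precisely the step you yourself flag as needing care. The paper sidesteps endpoint analysis entirely with a small trick: setting $k(x)=x-h(x)$, the involution identity gives $k(h(x_1))=h(x_1)-h(h(x_1))=-k(x_1)$, so if $k(x_1)\neq 0$ then $k$ already changes sign on the compact subinterval with endpoints $h(x_1)$ and $x_1$, which lies inside $J$; the intermediate value theorem then applies with no discussion of limits or of whether the endpoints are finite or belong to $J$. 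The anti-symmetry $k\circ h=-k$ is the key observation you are missing, and it buys a shorter, more self-contained existence argument; your route buys a perhaps more geometrically transparent picture (the graph must cross the diagonal because it connects opposite corners of $J\times J$). Your uniqueness argument (two fixed points contradict strict decrease directly) is an equally good variant of the paper's (at most one zero of the strictly increasing $k$).
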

\begin{proof} The function $h$ is  strictly monotonic being continuous and injective on an interval. Let us prove that   $h$  strictly decreases. Suppose it does 
 not,  then it  is increasing. Since $h\ne id_J$ then $h(x_0)\ne x_0$ for some $x_0\in J$. If $x_0<h(x_0)$ 
then $h(x_0)<h(h(x_0))=x_0$ a contradiction; similarly, $x_0>h(x_0)$ implies $h(x_0)>x_0$.
Thus $h$ decreases and 
the function $k(x)=x-h(x)$ strictly
increases. Fixed points of $h$ coincide with zeros of $k$ and there is one zero at most since $k$ strictly
increases. Consider a point $x_1\in J$. If $k(x_1)=0$ then $x_1$ is the unique fixed point of $h$. If $k(x_1)>0$, then $x_1>h(x_1)$ and $k(h(x_1))=h(x_1)-h(h(x_1))=h(x_1)-x_1=-k(x_1)<0$,
so, by the continuity of $k$,  there exists  $\bar x\in(h(x_1),x_1)$ such that $k(\bar x)=0$, namely $\bar x$ is the fixed point of $h$.
 If $k(x_1)<0$ we can argue similarly.
\end{proof}
These and other general properties are well known. Involutions are solutions
to the celebrated Babbage functional equation $\phi^n(x)=x$,  in the case $n=2$,
see the book \cite[Chap. XV]{K}, by Kuczma,  in particular Thms. 15.3 and 15.2, and Lemma 15.1.
See also 
Kuczma, Choczewski and  Ger \cite[Chap. 11]{KCG}, and Section~1 of~\cite[Chap. VIII]{PR} by Przeworska-Rolewicz
where $h$ as above is called a Carleman function.

For $\bar x$ as above, the function $x\mapsto h(x+\bar x)-\bar x$ is also an involution and has $0$ as fixed point, conversely
$x\mapsto h(x-\bar x)+\bar x$ has fixed point $\bar x$ if $h(0)=0$.

In the sequel we shall consider non-trivial involutions $h\ne id_J$, with $J$ open interval and $h(0)=0$, moreover we are going 
to study  \emph{smooth} $h\in C^1$ involutions. By the chain rule we have $h'(h(x))h'(x)=1$ so $h'(x)\ne 0$ at all $x$, more precisely $h'(x)<0$ since we excluded the identity, and $h'(0)=-1$ necessarily. So, the present paper  uses
the following terminology: 

\begin{defin}\label{involution} 
A continuous function $h$ 
 of an  open interval $J\subseteq \R$ onto itself is called an \emph{involution} if
\begin{equation}\label{h-properties}
  h^{-1}=h,\qquad 0\in J,\qquad
  h(0)=0,\qquad h\ne id_J.
  \end{equation}
  In particular it is called a smooth involution if $h\in C^1$, so it is a $C^1$ diffeomorphism
  with $h'(0)=-1$.
\end{defin}

Of course $h(x)=-x$ is an involution on the whole $\R$. The following piecewise-linear example is taken from \cite{PR} p.~177
\begin{equation}\label{piecewiselinear}
h(x)=\begin{cases} -x/\lambda, \quad x\le 0\\ -\lambda\, x, \ \quad x> 0\end{cases}\qquad x\in\R, \quad \lambda>0.
\end{equation}

A very simple smooth involution which seems to be ``new'' is 
\begin{equation}\label{logexp}
h(x)=\ln\bigl(2-e^x\bigr),\qquad x<\ln 2.
\end{equation}


\section{Constructing continuous involutions}\label{constructingcontinuosinvolutions}

Our main result  is the following:

\begin{thm}\label{theorhcontinuos}
Let $h:J\to J$ be a (continuous) involution as in Definition~\ref{involution}, then $k(x):=x-h(x)$  is a homeomorphism $J\to I$ with $I$ symmetric open interval,
and the function $P:I\to\R$ defined by $P(y)=2k^{-1}(y)-y$ satisfies $P(0)=0$ and is even.
Vice versa, if $P: I\to\R$, with $P(0)=0$, is a continuous even function on a symmetric open interval such that the function $K: I\to J$,
\begin{equation}\label{condcont}
 K(y)= \frac12 \bigl(y+P(y)\bigr),
\end{equation}
 is a  homeomorphism onto some $J$, then $h(x):=x-k(x)$, $k=K^{-1}$,
 is an involution on $J$.
\end{thm}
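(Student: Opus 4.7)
The plan is to prove the two directions independently, with the key identity $k\circ h = -k$ serving as the bridge between the involution property of $h$ and the evenness of $P$.

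For the forward direction, I would first invoke Proposition~\ref{P1} to get that $h$ is strictly decreasing, whence $k(x)=x-h(x)$ is continuous and strictly increasing on $J$. This immediately makes $k$ a homeomorphism from $J$ onto the interval $I:=k(J)$, and $k(0)=0$ shows $0\in I$. Symmetry of $I$ comes from the computation
\[
k(h(x)) = h(x)-h(h(x)) = h(x)-x = -k(x),
\]
i.e.\ $k\circ h=-k$. Since $h$ is a bijection of $J$, the map $-k=k\circ h$ is also a bijection $J\to I$, forcing $-I=I$. For the evenness of $P$, writing $x=k^{-1}(y)$ the identity $k\circ h=-k$ gives $k^{-1}(-y)=h(x)=x-y$, so $P(-y)=2(x-y)+y=2x-y=P(y)$, and $P(0)=2k^{-1}(0)=0$.

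For the reverse direction, I would define $k:=K^{-1}$ and $h(x):=x-k(x)$, then verify the four properties in Definition~\ref{involution}. Continuity is clear, and $K(0)=(0+P(0))/2=0$ yields $h(0)=0$. The heart of the argument is the rewriting
\[
h(x) = x-y = \frac{y+P(y)}{2}-y = \frac{P(y)-y}{2}, \qquad y=k(x),
\]
which, by the evenness of $P$, equals $K(-y)=(-y+P(-y))/2$. Thus $h(x)=K(-k(x))$, giving $k(h(x))=-k(x)$. From this,
\[
h(h(x)) = h(x)-k(h(x)) = h(x)+k(x) = x,
\]
so $h^{-1}=h$. Surjectivity $h(J)=J$ follows because $-k$ sends $J$ onto $-I=I$ and $K$ sends $I$ onto $J$; and $h\ne id_J$ because $k$ is a homeomorphism onto the nontrivial interval $I$, so $k$ cannot vanish identically.

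The main obstacle here is really bookkeeping rather than any deep idea: one must keep the symmetry of $I$, the evenness of $P$, and the involution property of $h$ carefully in sync, and notice that all three are essentially encoded by the single relation $k\circ h=-k$. Once that relation is isolated in each direction, the rest reduces to a couple of algebraic substitutions using the defining formula $K(y)=(y+P(y))/2$.
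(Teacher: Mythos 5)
Your proof is correct and follows essentially the same route as the paper's: both directions pivot on the single identity $k\circ h=-k$, equivalently $h=k^{-1}\circ(-k)$, with the forward evenness of $P$ and the reverse involution property each read off from it by direct substitution into $K(y)=\tfrac12(y+P(y))$. Your write-up is marginally more explicit than the paper's about surjectivity of $h$ and the symmetry $-I=I$, but the underlying argument is identical.
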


\begin{proof} If $h:J\to J$ is an involution  then $k(x):=x-h(x)$ is strictly increasing as we already
saw in the proof of Proposition~\ref{P1}, so a homeomorphism
onto some open interval $I$ as well known. The interval $I$ is symmetric
since 
\begin{equation*}
y=k(x)=x-h(x)\in I\Longrightarrow -y=-k(x)=k\bigl(h(x)\bigr)\in I.
\end{equation*}
Next, $h(x)=k^{-1}(k(h(x)))=k^{-1}(-k(x))$ and
\begin{multline*}
P(y):=2k^{-1}(y)-y=-2y+2k^{-1}(y)+y=-2k(k^{-1}(y))+2k^{-1}(y)+y=\\=-2k^{-1}(y)+2h(k^{-1}(y))+2k^{-1}(y)+y=
2k^{-1}(-y)+y=P(-y).
\end{multline*}
This fact and $P(0)=0$ prove the first sentence. To prove the second sentence let us plug $k(x)=K^{-1}(x)$ into \eqref{condcont}, and then plug $-k(x)$
\begin{multline*}
x=\frac12 \bigl(k(x)+P(k(x))\bigr), \quad k^{-1}\bigl(-k(x)\bigr)=\frac12 \bigl(-k(x)+P(-k(x))\bigr)=\\
=\frac12 \bigl(-k(x)+P(k(x))\bigr) \Longrightarrow x-k^{-1}\bigl(-k(x)\bigr)=k(x)=x-h(x).
\end{multline*}
Thus $h(x)=k^{-1}(-k(x))$ which shows that $h$ is a homeomorphism,  
$h\ne id_J$ (otherwise $k=-k$ so $k\equiv 0$), and $h(h(x))=x$ for all $x\in J$ namely $h^{-1}=h$. Finally $P(0)=0$ implies $k(0)=0$ and $h(0)=0$.
\end{proof}

Since $I=k(J)$ we have that $I=(\inf J-\sup J, \sup J-\inf J)$ when both $\inf J,\sup J\in \R$,
and $I=\R$ otherwise. 

Of course, if we consider an \emph{arbitrary} $C^1$ even function $P$, with $P(0)=0$, then $P'(0)=0$, and formula \eqref{condcont} restricted to the maximal symmetric open interval $I$ where $K'(y)>0$, defines a strictly increasing diffeomorphism onto an 
interval $J$, and $h(x):=x-k(x)$, with $k=K^{-1}$,
 is a smooth involution on $J$.

\begin{figure}
\begin{center}
\includegraphics[width=.4\textwidth]{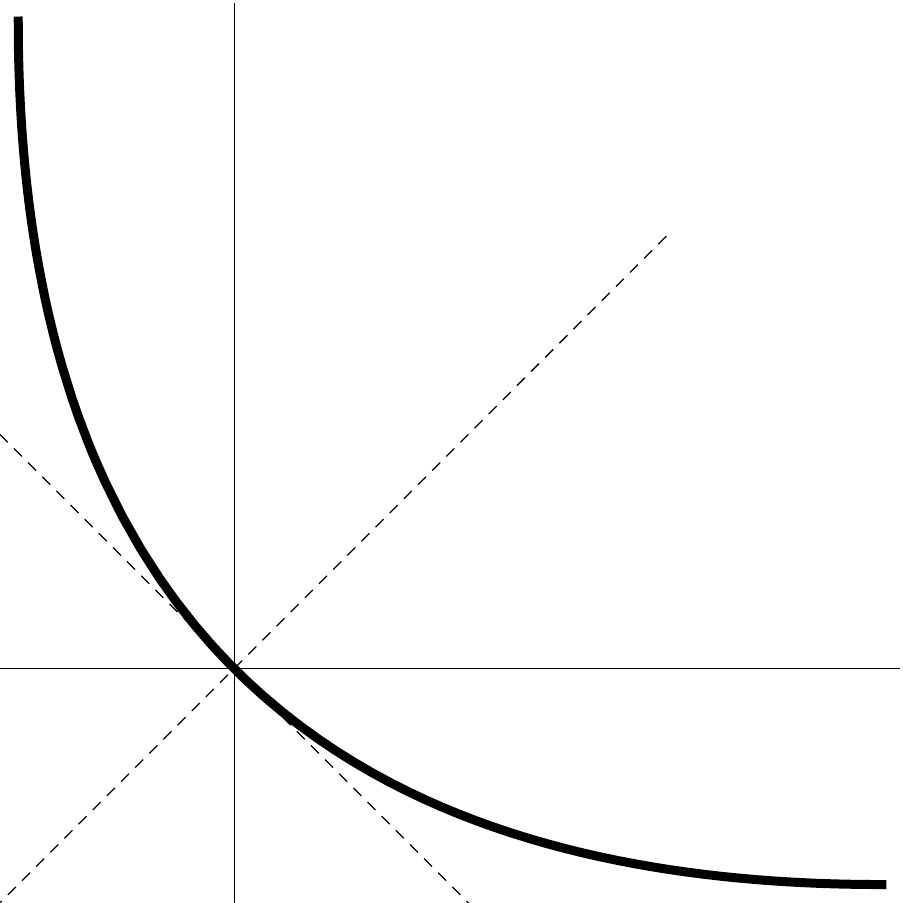}
\end{center}
\caption{Involution \eqref{parabolic-h} given by $P(y)=y^2/8$.}
\label{parabolic}
\end{figure}
For instance, starting from $P(y)=y^2/8$, $y\in \R$, formula \eqref{condcont} defines $K(y)=y/2+y^2/16$ and $K'(y)>0$ if and only if $y>-4$. So $K$ is injective on
the symmetric open interval $I=(-4,4)$ and a homeomorphism  $I\to J=K(I)=(-1,3)$. The function $k(x)=K^{-1}(x)=-4+4\sqrt{1+x}$ and finally we get the involution
$h(x)=x-k(x)$ namely
\begin{equation}\label{parabolic-h}
h:(-1,3)\to (-1,3),\; x\mapsto x+4-4\sqrt{1+x}.
\end{equation}

If we start from $P(y)=y^6$ we have  
\begin{equation*}
k:J=\bigl(\textstyle{\frac{-5}{12\cdot 6^{1/5}},\frac{7}{12\cdot 6^{1/5}}\bigr)\to I=\bigl(\frac{-1}{6^{1/5}},\frac{1}{6^{1/5}}}\bigr)
\end{equation*}
and $h:J\to J$ is the non elementary algebraic function in Figure~\ref{algebraic}.
 \begin{figure}
\begin{center}
\includegraphics[width=.4\textwidth]{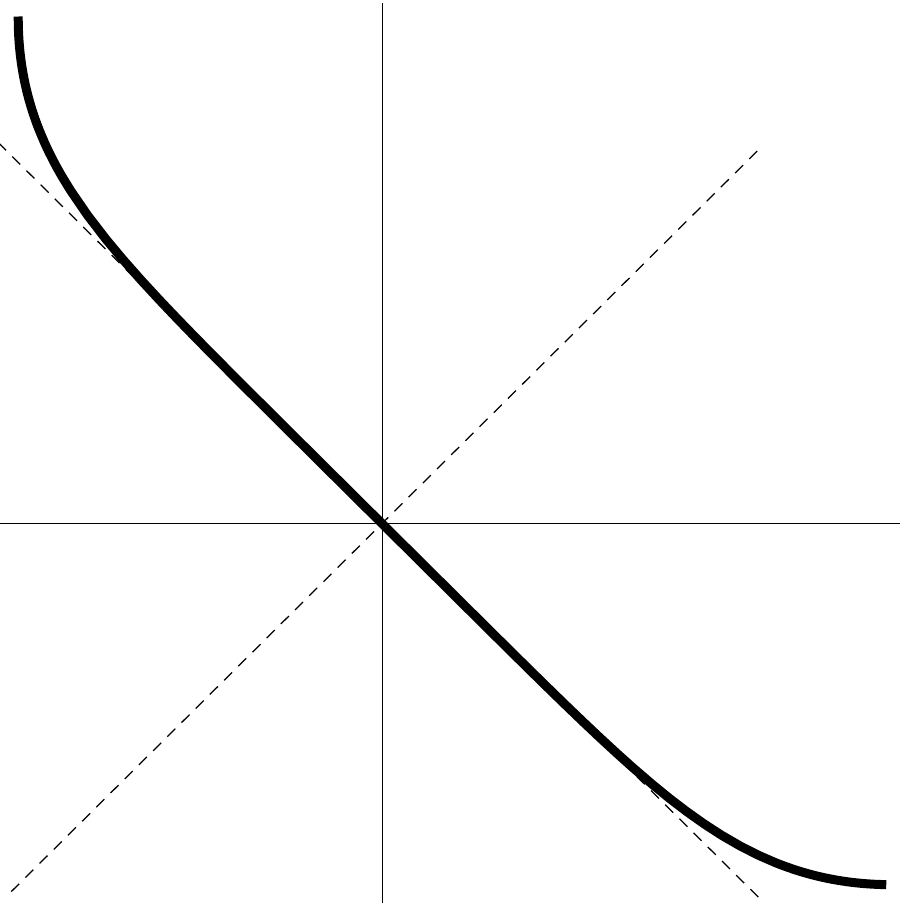}
\end{center}
\caption{Involution  given by $P(y)=y^6$.}
\label{algebraic}
\end{figure}

To illustrate the first part of the theorem, consider the piecewise linear involution \eqref{piecewiselinear}. 
It gives
\begin{equation*}
k(x)=\begin{cases} (1+\lambda) x/\lambda, \  x\le 0\\ (1+\lambda)\,x,\; \quad x>0\end{cases}\quad
k^{-1}(y)=\begin{cases} \lambda\, y/(1+\lambda), \  y\le 0\\ y/(1+\lambda), \quad y>0\end{cases}
\end{equation*}
and the even function $P(y)=2 k^{-1}(y)-y$ is
\begin{equation}
P(y)=\frac{1-\lambda}{1+\lambda}\;|y|,\qquad y\in\R,\quad \lambda>0.
\end{equation}

Finally,  the smooth involution \eqref{logexp} gives
\begin{equation*}
k(x)=x-\ln\bigl(2-e^x\bigr),\quad k\bigl(-\infty,\ln 2)=\R,\quad k^{-1}(y)=\ln \frac{2}{1+e^{-y}},
\end{equation*}
 and the following even function  
\begin{equation}
P(y)=-y+2 \ln \frac{2}{1+e^{-y}}=-2\ln \cosh \frac{y}{2},\qquad y\in\R.
\end{equation}
 
 Functional equations relating involutory and even functions are studied in Schwerdtfeger \cite{S},
however our simple Theorem~\ref{theorhcontinuos} seems to be new.
\section{Involutions given by  symmetric equations}\label{symmetric}

The condition $h^{-1}=h$ is equivalent to the symmetry of the graph of~$h$  with respect to the diagonal; indeed  $(x, h(x))$ has $(h(x),x)$ as symmetric point and this coincides
with the point $(h(x),\allowbreak h(h(x)))$ of the graph.   For example, consider the hyperbola $y\,x=1$, which is symmetric with respect to the diagonal. In order to fulfill the further condition $h(0)=0$,   we translate its point $(1,1)$ to the origin. In this way
 we get $(y+1)(x+1)=1$, which can be solved for $y$ as $y=-x/(1+x)$. If we finally take the branch that goes through the origin we arrive at the following involution
\begin{equation}\label{h-upperrectangularhyperbola}
  h(x)=-\frac{x}{1+x},\qquad x\in J=(-1,+\infty)\,.
\end{equation}

Involutions are preserved by  homothety:

\begin{rem}\label{remarkfamily} Let $a\in\R\setminus\{0\}$ and $h$ be an involution on $(b,c)$, then
 $\tilde h(x)=h(a\,x)/a$ is an involution on $(b/a,c/a)$ if $a>0$, on $(c/a,b/a)$ otherwise.
\end{rem}
In this way \eqref{h-upperrectangularhyperbola} gives the following 1-parameter family of involutions
\begin{equation}\label{h-rational}
  h(x)=-\frac{x}{1+a x}\,, \qquad 
  x\in J=\begin{cases}
  (-1/a,+\infty),&a>0\\
  (-\infty,+\infty),&  a=0\\
  (-\infty,-1/a),& a<0\end{cases}
\end{equation}
These are the only involutions that are rational functions of~$x$ as shown in Cima,  Ma\~nosas, and Villadelprat~\cite{CMV}.

Acz\'el in \cite{A} and Shisha and Mehr in \cite{SM} obtain injective functions $\hat h:\R\to\R$ such that $\hat h^{-1}=\hat h$ from symmetric
 functions $f:\R^2\to\R$,
$f(x,y)=f(y,x)$. The paper \cite{SM} suppose that for every $x\in\R$ there exists a unique $y$ to be denoted by $\hat h(x)$ such that
$f(x,y)=0$, then $\hat h$ satisfies $\hat h^{-1}=\hat h$. In particular, $f(x,y)=x^3+y^3-a$ gives  $\hat h(x)=\sqrt[3]{a-x^3}$ which has the fixed point $\bar x=\sqrt[3]{a/2}$. The function $x\mapsto \hat h(x+\bar x)-\bar x$, i.e.
\begin{equation}\label{SMGCI}
\R\to\R,\quad x\mapsto \sqrt[3]{a-\left(x+\sqrt[3]{a/2}\right)^3}-\sqrt[3]{a/2},
\end{equation}
 is an involution
in the sense of Definition~\ref{involution}. For $a\ne 0$ it  is  non-differentia\-ble at $x=\sqrt[3]{a}-\sqrt[3]{a/2}$. 
 \begin{figure}
\begin{center}
\includegraphics[width=.5\textwidth]{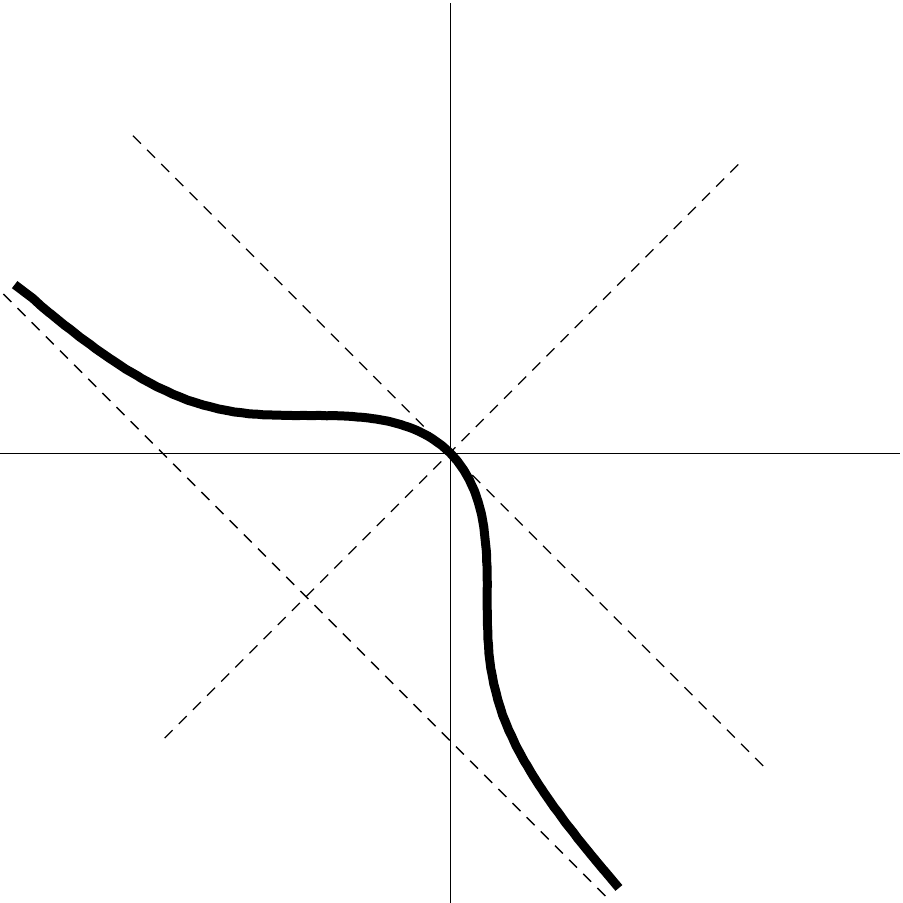}
\end{center}
\caption{Global involution \eqref{SMGCI} for $a=2$.}
\label{shisha}
\end{figure}

We consider smooth symmetric equations in order to use the implicit function theorem:
\begin{prop}\label{smoothinvolution}
Let $f:\Omega\to\R$ be a $C^1$ function on the open set $\Omega \subseteq\R^2$ such that: $(0,0)\in\Omega$, $f(0,0)=0$, and
\begin{equation}\label{symmetry2}
  (x,y)\in \Omega\quad\Longrightarrow\quad
  (y,x)\in\Omega,\quad f(y,x)=f(x,y).
\end{equation}
Let $\Gamma$ be the connected component of $f^{-1}(0)$ that contains the origin. Suppose that $\partial_2f(x,y)\ne 0$ for all $(x,y)\in\Gamma$. Then $\Gamma$ is the graph of a smooth involution $h$. All smooth involutions can be obtained this way.
\end{prop}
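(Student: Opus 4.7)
The strategy is to use the implicit function theorem (IFT) to build the graph of $h$ from $\Gamma$ in the forward direction, and to invert the construction of Theorem~\ref{theorhcontinuos} to produce $f$ in the converse.

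For the forward direction I begin at $(0,0)\in\Gamma$: since $\partial_2 f(0,0)\ne 0$, the IFT yields an open interval about $0$ on which the zero set of $f$ is a $C^1$ graph $y=h(x)$ with $h(0)=0$. I let $J\subseteq\R$ be the maximal open interval, containing $0$, on which such a $C^1$ extension $h:J\to\R$ exists with $(x,h(x))\in\Gamma$ for all $x\in J$; uniqueness of the solution in the IFT ensures that local extensions are consistent. Writing $G$ for the graph of $h$, I claim $G=\Gamma$. Clearly $G$ is connected and, by IFT, open in $\Gamma$. For closedness, a limit point $(a,b)\in\Gamma$ of $G$ satisfies $\partial_2 f(a,b)\ne 0$, so IFT at $(a,b)$ gives a local $C^1$ solution $\tilde h$ which must agree with $h$ near $a$ (by uniqueness, applied to a sequence $(x_n,h(x_n))\to(a,b)$); maximality of $J$ then forces $a\in J$ and $b=h(a)$. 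By connectedness of $\Gamma$, $G=\Gamma$, and in particular $\pi_1(\Gamma)=J$ is an open interval.

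Having $\Gamma=$ graph of $h$, the involution property drops out of the symmetry: the swap $(x,y)\mapsto(y,x)$ is a homeomorphism of $\Omega$ that fixes $(0,0)$ and leaves $f^{-1}(0)$ invariant, hence preserves $\Gamma$. So $(h(x),x)\in\Gamma$, giving $h(h(x))=x$ and $h(J)\subseteq J$; one more application of $h$ yields $h(J)=J$. Finally, $h=id_J$ would mean $f(x,x)=0$ for all $x\in J$, so $\partial_1 f(x,x)+\partial_2 f(x,x)=0$; but differentiating $f(y,x)=f(x,y)$ gives $\partial_1 f(x,x)=\partial_2 f(x,x)$, so $\partial_2 f(x,x)=0$, contradicting the hypothesis on $\Gamma$.

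For the converse, I start with a smooth involution $h:J\to J$ and use Theorem~\ref{theorhcontinuos} to produce the even $C^1$ function $P:I\to\R$ with $P(0)=0$, characterized by $x+h(x)=P(x-h(x))$ for $x\in J$. I set
\begin{equation*}
  \Omega=\{(x,y)\in\R^2:x-y\in I\},\qquad f(x,y)=x+y-P(x-y).
\end{equation*}
Then $\Omega$ is open, contains $(0,0)$, and is stable under the swap (because $I$ is symmetric), while $f$ is $C^1$, symmetric (because $P$ is even), and $f(0,0)=0$. A short computation using $k=K^{-1}$ and $K(v)=(v+P(v))/2$ shows $f^{-1}(0)$ equals the graph of $h$: if $f(x,y)=0$ and $v:=x-y\in I$, then $x=K(v)\in J$ and $y=x-v=K(v)-v=h(K(v))=h(x)$. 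This graph is connected and contains the origin, so $\Gamma$ is exactly the graph of $h$, and $\partial_2 f=1+P'(x-y)=2(k^{-1})'(x-y)>0$ on $\Gamma$, verifying all the hypotheses.

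The main obstacle is the closedness of $G$ in $\Gamma$ in the forward direction: a priori a limit point of $G$ in $\Gamma$ could lie on a different branch of the zero set, and one needs the uniqueness part of the IFT together with maximality of $J$ to rule this out. The remaining steps are essentially routine verifications using symmetry and the stated hypotheses.
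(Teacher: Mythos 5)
Your proof is correct, and it is worth separating the two halves. In the forward direction you follow the same route as the paper (implicit function theorem plus swap-invariance of $\Gamma$), but you are more explicit about the globalization step: the paper simply asserts that the IFT makes $\Gamma$ the graph of a $C^1$ function, whereas you supply the open--closed argument in the connected set $\Gamma$ that actually justifies this. You also obtain $h\circ h=id_J$ directly from the symmetry of $\Gamma$ and rule out $h=id_J$ by a derivative computation on the diagonal, while the paper instead uses $\partial_1f(x,y)=\partial_2f(y,x)$ to get $h'(x)=-\partial_1f/\partial_2f<0$, which yields $h\ne id_J$ and $h'(0)=-1$ in one stroke; both variants are fine, and the remaining properties ($h'<0$, $h'(0)=-1$) follow automatically as noted in the paper's introduction. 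In the converse your construction is genuinely different: the paper takes the very economical $f(x,y)=x+y-h(x)-h(y)$ on $J\times J$, verified directly with $k(x)=x-h(x)$, whereas you route through Theorem~\ref{theorhcontinuos} and set $f(x,y)=x+y-P(x-y)$ on the strip $x-y\in I$. Your version has the merit of linking the two main results of the paper, at the cost of one small unproved assertion: you call $P$ a $C^1$ function, which Theorem~\ref{theorhcontinuos} does not provide (it only gives continuity); you should note that $k'=1-h'>0$ makes $k$ a $C^1$ diffeomorphism, hence $P(y)=2k^{-1}(y)-y$ is indeed $C^1$, which is also what makes your computation $\partial_2f=2(k^{-1})'(x-y)>0$ legitimate. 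With that one line added, both directions are complete.
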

\begin{proof} By the implicit function theorem, $\Gamma$~is the graph of a $C^1$ function $h$ with $h(0)=0$. 
Let $J$ be the projection of $\Gamma$ on the $x$-axis. It is an open interval since $\Gamma$ is open and connected. From~\eqref{symmetry2}
we have $\partial_1f(x,y)=\partial_2f(y,x)$ for $(x,y)\in\Omega$ so $\partial_1f$ never vanishes on $\Gamma$ and has the same sign as $\partial_2f$. We deduce that
\begin{equation*}
  h'(x)=-\frac{\partial_1f\bigl(x,h(x)\bigr)}
  {\partial_2f\bigl(x,h(x)\bigr)}<0,\qquad x\in J,
\end{equation*}
and in particular $h'(0)=-1$. Finally, $f(h(y),y)=f(y,h(y))=0$ shows that $h^{-1}=h$.

Let us prove the last sentence. Let $h:J\to J$ be a smooth involution and define $f(x,y):= x+y-h(x)-h(y)$. This is a $C^1$ function on $J\times J$, $f(0,0)=0$, and $f(y,x)=f(x,y)$. We have $\partial_2f(x,y)=1-h'(y)>0$ for all $(x,y)\in J\times J$. The graph of $h$  coincides with   $f^{-1}(0)$. Indeed,
if $y=h(x)$ then $x=h(y)$ and $f(x,y)=0$; conversely, if $f(x,y)=0$ then $k(y)=y-h(y)=-(x-h(x))=-k(x)$,  so $y=k^{-1}(k(y))=k^{-1}(-k(x))=k^{-1}(k(h(x))=h(x)$.
\end{proof}

 \begin{figure}
\begin{center}
\includegraphics[width=.5\textwidth]{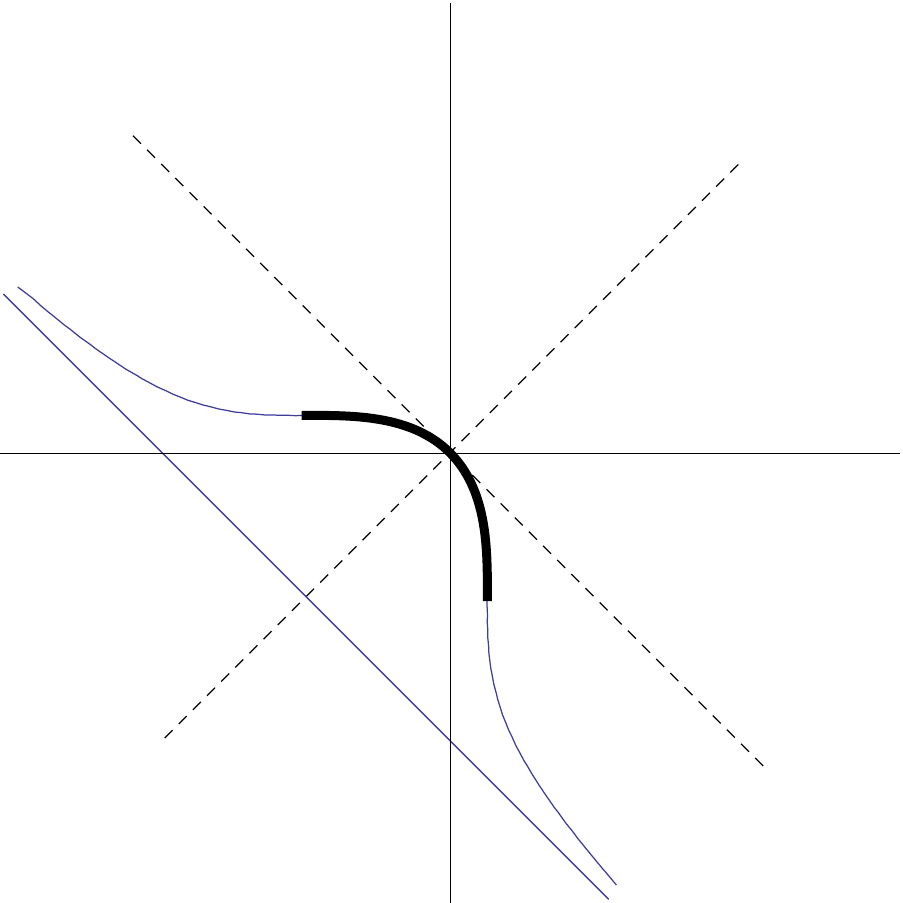}
\end{center}
\caption{Smooth involution in \eqref{h-shishasmooth}.}
\label{shishasmooth}
\end{figure}
For instance, let us consider the following function on the whole $\R^2$
\begin{equation*}
\tilde f(x,y)=\left(\bigl(x+1\bigr)^3+\bigl(y+1\bigr)^3-2\right)\left(x+y+2\right),
\end{equation*}
let $\tilde \Gamma$ be the cubic plane curve $\bigl(x+1)^3+\bigl(y+1)^3-2=0$ and let $L$ be the straight line $x+y+2=0$.
The connected sets $\tilde \Gamma$ and $L$ are disjoint, moreover $(0,0)\in\tilde\Gamma$. We have 
$\partial_2\tilde f(x,y)=0$ for $(x,y)\in\tilde \Gamma$ when $y=-1$  namely at
the point $(c,-1)\in\tilde\Gamma$ with $c=\sqrt[3]{2}-1$. So we define $\Omega=(-1,c)\times (-1,c)$, an
open square with $(0,0)\in\Omega$, and we have that the restriction $f=\tilde f|\Omega$, satisfies \eqref{symmetry2}.
The connected component of $f^{-1}(0)$ that contains the origin is $\Gamma=\tilde\Gamma\cap \Omega$ and $\partial_2f(x,y)\ne 0$ for all $(x,y)\in\Gamma$. Therefore $\Gamma$ is the graph of a smooth involution $h$. In this case
we can even write the explicit formula which is the following restriction of the   involution \eqref{SMGCI} for $a=2$
 \begin{equation}\label{h-shishasmooth}
h: \bigl(-1,\sqrt[3]{2}-1\bigr) \to \bigl(-1,\sqrt[3]{2}-1\bigr), x\mapsto \sqrt[3]{2-(x+1)^3}-1.
\end{equation}
The thick curve in Figure~\ref{shishasmooth} is the graph of this smooth involution which is a piece of the non-smooth graph $\tilde \Gamma$, compare with  Figure~\ref{shisha}. The straight line below $\tilde \Gamma$ is $L$.

\section{Isochronous potentials by involutions}\label{isochrony}

An equilibrium point of a planar vector field is called a (local) \emph{center} if all orbits in a neighborhood are
periodic and enclose it. The center is \emph{isochronous} if all periodic orbits have the same period.
The smooth involutions can be used to construct the isochronous centers for the scalar equation $\ddot{x}=-g(x)$
as proved  in the 1989 paper~\cite{Z2}, by  the present author. There are other different approaches to such isochronous centers, which do not
involve involutions,  in particular the 1961 Urabe's paper~\cite{U1}, see  also~\cite{U2}.

\begin{thm}\label{isochronous}
Let $h:J\to J$ be a smooth involution, $\omega>0$, and define 
\begin{equation}\label{Visochronous}
  V(x)=\frac{\omega^2}{8}\,\big(x-h(x)\big)^2,\qquad
  x\in J.
\end{equation}
Then  the origin is an isochronous center for $\ddot{x}=-g(x)$, where $g(x)=V'(x)$, namely all orbits which intersect the $J$ interval of the $x$-axis in the $x,\dot x$-plane, are periodic and have the same period $2 \pi/\omega$. Vice versa, let $g$ be continuous on a neighborhood of $0\in\R$, $g(0)=0$, suppose there exists $g'(0)>0$, and the origin is an isochronous center for $\ddot x=-g(x)$,  then there exist an open interval $J$, $0\in J$, which is a subset of the domain of $g$, and an involution $h:J\to J$  such that \eqref{Visochronous} holds with $V(x)=\int_0^xg(s)ds$ and $\omega= \sqrt{g'(0)}$.
\end{thm}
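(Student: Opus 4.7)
\emph{Forward direction.} My plan is to invoke Theorem~\ref{theorhcontinuos} to express $k(x) := x-h(x)$ as a diffeomorphism $J \to I$ onto a symmetric open interval, with inverse $K(y) = \tfrac12(y+P(y))$ for some even $P \in C^1(I)$. Then $V(x) = \tfrac{\omega^2}{8}k(x)^2$ satisfies $V(0) = V'(0) = 0$ and $V''(0) = \tfrac{\omega^2}{4}(k'(0))^2 = \omega^2 > 0$ (since $k'(0) = 1 - h'(0) = 2$), so $V$ is a nondegenerate potential well. For small $E > 0$ the turning points are $x_\pm(E) = K(\pm \sqrt{8E}/\omega)$, and the substitution $y = k(x)$ in the standard period formula yields
\begin{equation*}
T(E) = \sqrt{2}\int_{x_-(E)}^{x_+(E)} \frac{dx}{\sqrt{E-V(x)}} = \sqrt{2}\int_{-\sqrt{8E}/\omega}^{\sqrt{8E}/\omega} \frac{\tfrac12(1+P'(y))}{\sqrt{E - \omega^2 y^2/8}}\,dy.
\end{equation*}
Since $P'$ is odd, its contribution vanishes by symmetry, and the remaining integral is the harmonic-oscillator one; the substitution $y = (\sqrt{8E}/\omega)\sin\theta$ evaluates it to $2\pi/\omega$, independent of $E$.

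\emph{Converse.} The natural candidate for $h$ is the same-energy pairing. Since $V''(0) = g'(0) = \omega^2 > 0$, the potential $V$ is strictly convex near $0$; let $V_+$ and $V_-$ denote its strictly monotone restrictions to the right and left of $0$. I would define $h(x) := V_-^{-1}(V(x))$ for $x \geq 0$ and symmetrically for $x < 0$, so that $h^{-1} = h$ and $h(0) = 0$ on some open interval $J \ni 0$. Smoothness of $h$ off $0$ follows from the implicit function theorem applied to $V(h(x)) - V(x) = 0$, giving $h'(x) = g(x)/g(h(x))$; the hypothesis $g'(0) > 0$ then yields $h'(0) = -1$ by a direct limit, so $h$ is a smooth involution in the sense of Definition~\ref{involution}.

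\emph{Closing the loop via Abel's equation.} To obtain \eqref{Visochronous} from isochronicity, I would split the period integral at $x = 0$ and substitute $u = V(x)$ in each half to get
\begin{equation*}
T_0 = \sqrt{2}\int_0^E \frac{\phi'(u)}{\sqrt{E - u}}\,du, \qquad \phi(u) := V_+^{-1}(u) - V_-^{-1}(u),
\end{equation*}
for every sufficiently small $E > 0$, where $T_0$ is the common period. The ansatz $\phi(u) = c\sqrt{u}$ combined with the classical identity $\int_0^E du/\sqrt{u(E-u)} = \pi$ gives $T_0 = c\pi\sqrt{2}/2$, i.e.\ $c = T_0\sqrt{2}/\pi$, and by injectivity of the Abel transform on functions vanishing at $0$ this is the unique solution. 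Therefore $x - h(x) = \phi(V(x)) = c\sqrt{V(x)}$ for $x \geq 0$, and squaring yields $V(x) = \tfrac{\omega^2}{8}(x - h(x))^2$ once we identify $c = 2\sqrt{2}/\omega$, which simultaneously forces $T_0 = 2\pi/\omega$. The main technical obstacles I expect are (i) verifying $h'(0) = -1$ using only continuity of $g$ and the existence of $g'(0)$, and (ii) justifying the Abel uniqueness step on the natural class of admissible $\phi$ near the degenerate endpoint $u = 0$; both are classical but demand care.
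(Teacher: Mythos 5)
The paper itself does not prove Theorem~\ref{isochronous}; it defers to Proposition~1 of \cite{Z2} and to Theorem~2.1 and Corollary~2.2 of \cite{Z3}. Your argument is therefore a self-contained substitute, and in outline it is correct. The forward direction is a nice use of Theorem~\ref{theorhcontinuos}: writing $x=K(y)=\frac12\bigl(y+P(y)\bigr)$ with $P$ even turns the period integral into the harmonic-oscillator integral plus an odd perturbation that cancels by symmetry, which is essentially the mechanism behind formula~\eqref{Visochronous} in \cite{Z2}. The converse via the same-energy pairing $V(h(x))=V(x)$ and the Abel integral equation for the well width $\phi(u)=V_+^{-1}(u)-V_-^{-1}(u)$ is the classical inverse-problem route; it is sound, and your formula $h'(x)=g(x)/g(h(x))$ together with $h(x)/x\to-1$ correctly delivers a smooth involution using only continuity of $g$ and existence of $g'(0)>0$. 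Two points should be made explicit rather than left as remarks. First, the identification $c=2\sqrt{2}/\omega$ does not come for free from the Abel step, which only gives $x-h(x)=c\sqrt{V(x)}$ with $c=T_0\sqrt{2}/\pi$; you must pin down $c$ (equivalently $T_0$) by letting $x\to0$ in $V(x)=(x-h(x))^2/c^2$, using $V(x)/x^2\to g'(0)/2$ and $(x-h(x))/x\to2$ to get $4/c^2=\omega^2/2$, whence $T_0=2\pi/\omega$. Second, the ``injectivity of the Abel transform'' is best replaced by the direct computation: apply $\int_0^W(W-E)^{-1/2}\,dE$ to both sides of the period identity and use Tonelli (the integrand $\phi'>0$ behaves like $u^{-1/2}$ near $0$, so everything converges) to obtain $\pi\phi(W)=2T_0\sqrt{W}/\sqrt{2}$ outright, with no appeal to a uniqueness theorem on a delicate function class. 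With those two steps written out, the proof is complete.
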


The potential $V$ of an isochronous center is called an \emph{isochronous potential}.
The proof is included in the proof of Proposition~1 in~\cite{Z2}  as a particular case. Formula \eqref{Visochronous} corresponds to formula~(6.2) in the paper~\cite{Z2}. A detailed proof can be also found in the recent~\cite{Z3}; see Theorem~2.1 and Corollary~2.2 in~\cite{Z3}. This last paper also contains 
the following  necessary conditions for a smooth enough potential $V$ to be isochronous:
\begin{equation}
  V^{(4)}(0)=\frac{5V'''(0)^2}{3V''(0)},\quad
  V^{(6)}(0)=\frac{7V'''(0)V^{(5)}(0)}{V''(0)}-
  \frac{140V'''(0)^4}{9V''(0)^3},
\end{equation}
which can be deduced by taking successive derivatives of the involution relation $h(h(x))\equiv x$ at $x=0$.
We can consider the necessary condition at any even order derivative, provided that $V$ admits that derivative.

Inserting the involution~\eqref{h-rational} into formula~\eqref{Visochronous} we obtain the following isochronous potential
\begin{equation}\label{V-upperrectangularhyperbola}
  V(x)=\frac{\omega^2}{8}\,x^2
  \Bigl(\frac{2+a x}{1+a x}\Bigr)^2,\quad
  x\in J=\begin{cases} (-1/a,+\infty),&a>0\\
  (-\infty,+\infty),&a=0\\
  (-\infty,-1/a),&a<0.\end{cases}
\end{equation}
This is the only isochronous rational potential  as proved in~\cite{CV}. 

The paper~\cite{GZ}, by Gorni and the present author, studies the global isochronous potentials $V:\R\to\R$ in terms
of smooth involutions. In particular it gives implicit examples and
new explicit ones. Also, the paper~\cite{GZ} revisits Stillinger and Dorignac global isochronous potentials in terms of  involutions which are 
given by  hyperbolas in Stillinger's case.

\section{Instability under some attractive central forces}\label{centralforces}
The paper \cite{Z1} considers the differential system
\begin{equation}\label{cf}
\ddot x=-xf(x),\qquad \ddot y=-y f(x),\qquad f(0)=1,
\end{equation}
where $f$ is continuous near $0$. It represents the motion under a particular \emph{attractive central force} which
is not a gradient. The origin of $\R^2$ is a (local) center for the first equation $\ddot x=-xf(x)$. Let us introduce the potential $V(x)=\int_0^x sf(s)ds$. For a suitable  open interval $J\ni 0$, the potential
$V$ is strictly increasing on $J\cap \R_+$, strictly decreasing on $J\cap \R_-$,  and for each point $x\in J$ there is a unique point $h(x)\in J$  with $V\big(h(x)\big)=V(x)$, and  $xh(x)<0$ for $x\ne 0$. We easily see that the function $h$ is a smooth involution.
The origin in $\R^4$ is Lyapunov stable for \eqref{cf} if and only if for $x\ne 0$ in a neighborhood of $0$ we have
\begin{equation}\label{pc}
\frac{1}{V(x)}=\frac12\left(\frac1x-\frac{1}{h(x)}\right)^2,
\end{equation}
see formula (4.3) in \cite{Z1}. In particular,  if  $f$ is even then so is $V(x)$ and $h(x)=-x$,  formula \eqref{pc} is equivalent to $V(x)=x^2/2$ and  we have
stability  if and only if $f$ is constant in a neighborhood of~$0$. This particular case was studied in \cite{ZB}
with a different approach.
\begin{figure}
\begin{center}
\includegraphics[width=0.9\textwidth]{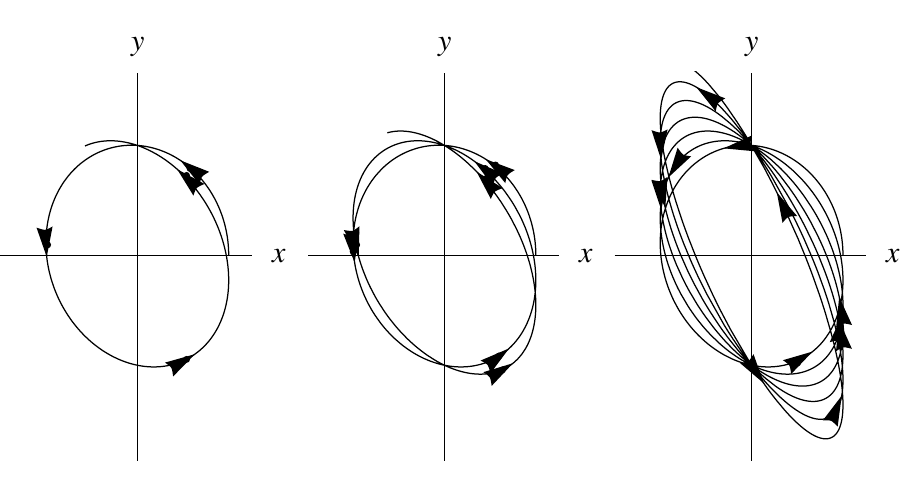}
\end{center}
\caption{Projection on the $x,y$-plane of a solution to \eqref{cf}.}
\label{illimitato}
\end{figure}

In Figure~\ref{illimitato} you can see the projection on the $x,y$-plane of a solution to \eqref{cf} for
$f(x)=1+x^2$. In this case, the origin is an unstable equilibrium for \eqref{cf}. The initial condition for the solution in Figure~\ref{illimitato} is $(x(0),\dot x(0),y(0),\dot y(0))=(0.4,0,0,0.5)$, on the left 
 $t\in[0,8]$, in the central picture $t\in[0,14]$, and on the right $t\in[0,38]$. It is an unbounded motion
 (see \cite{Z1} for details).

\section{Functional-differential equations with involutions}\label{odes}
Consider the following problem which involves the involution \eqref{h-upperrectangularhyperbola} on the interval $(-1,+\infty)$, the parameter $a\in\R$, and the initial datum $y_0\in\R$ at $t=0$
 \begin{equation}\label{problemwithh}
\begin{cases}y'(t)=a\,y\left(h(t)\right)\\ y(0)=y_0\end{cases},\qquad 
h(t)=-\frac{t}{1+t},\qquad t>-1\,. 
\end{equation}
If $y(t)$ is a $C^1$ solution then it is $C^2$. By differentiation we get
 \begin{equation*}
y''(t)=a\,h'(t)\,y'\left(h(t)\right)=
-\frac{a^2}{(1+t)^2}\,y\left(h\bigl(h(t)\bigr)\right)=-\frac{a^2}{(1+t)^2}\,y(t)\,. 
\end{equation*}
So  \eqref{problemwithh} is equivalent to the ordinary Cauchy problem
\begin{equation}\label{secondorder}
\begin{cases}y''(t)=\displaystyle{-\frac{a^2}{(1+t)^2}}\;y(t)\\ y(0)=y_0\\ y'(0)=a\,y_0\end{cases},\qquad 
 t>-1\,. 
\end{equation}
The solution is defined on the whole $(-1,\infty)$. For $|a|>1/2$:
\begin{equation*}
 y(t)=y_0\sqrt{1+t}\left(\cos\left(c\ln(1+t)\right)+\textstyle{\frac{2a-1}{2c}}
\sin\left(c\ln(1+t)\right)\right),\end{equation*}
where $c:=\sqrt{4a^2-1}/2$. While for $a=1/2$:
\begin{equation*}
y(t)=y_0\sqrt{1+t}\,.
\end{equation*}
For $a=-1/2$:
\begin{equation*}y(t)=y_0\sqrt{1+t}\,\left(1-\ln (1+t)\right).\end{equation*}
For $|a|<1/2$:\begin{equation*}
y(t)=\frac{y_0}{2\,b}\,(1+t)^{\frac{1-b}{2}}\Bigl(b+1-2a+(b-1+2a)\,(1+t)^b\Bigr),
\end{equation*}
where $b:=\sqrt{1-4a^2}$. This is just an example  of  functional-differential equations of Carleman type, a general theory is treated in  Chapter VIII
of Przeworska-Rolewicz \cite{PR} where references by other authors are quoted.
Equations with involutions are also studied in  \cite{BT}, \cite{CI}, \cite{DI}, \cite{SW}, \cite{W}, \cite{W1}, \cite{W2}, \cite{WW}.


\subsection*{Acknowledgements}
This research was partly supported by the PRIN ``Equazioni differenziali ordinarie e applicazioni''.

\end{document}